\newtheorem{theorem}{Theorem}
\newtheorem{lemma}[theorem]{Lemma}
\begin{document}

\title{A regularization algorithm for matrices of bilinear and sesquilinear forms\footnotetext{This is the authors' version of a work that was published in Linear Algebra Appl. 412 (2006) 380--395.}}

\author{Roger A. Horn\\Department of Mathematics, University of Utah\\Salt Lake City, Utah 84103, rhorn@math.utah.edu
\and Vladimir V. Sergeichuk\thanks{The research was started while this author was
visiting the University of Utah supported by NSF grant DMS-0070503.}\\Institute of Mathematics, Tereshchenkivska 3\\Kiev, Ukraine, sergeich@imath.kiev.ua}
\date{}
\maketitle

\begin{abstract}
Over a field or skew field $\mathbb{F}$ with an involution $a\mapsto
\widetilde{a}$ (possibly the identity involution), each singular square matrix
$A$ is *congruent to a direct sum
\[
S^{\ast}AS=B\oplus J_{n_{1}}\oplus\dots\oplus J_{n_{p}},\qquad1\leq n_{1}%
\leq\cdots\leq n_{p},
\]
in which $S$ is nonsingular and $S^{\ast}=\widetilde{S}^{T}$; $B$ is
nonsingular and is determined by $A$ up to *congruence; and the $n_{i}%
$-by-$n_{i}$ singular Jordan blocks $J_{n_{i}}$ and their multiplicities are
uniquely determined by $A$. We give a regularization algorithm that needs only
elementary row operations to construct such a decomposition. If $\mathbb{F}%
=\mathbb{C}$ (respectively, $\mathbb{F}=\mathbb{R}$), we exhibit a
regularization algorithm that uses only unitary (respectively, real
orthogonal) transformations and a reduced form that can be achieved via a
unitary *congruence or congruence (respectively, a real orthogonal
congruence). The selfadjoint matrix pencil $A+\lambda A^{\ast}$ is decomposed
by our regularization algorithm into the direct sum
\[
S^{\ast}(A+\lambda A^{\ast})S=(B+\lambda B^{\ast})\oplus(J_{n_{1}}+\lambda
J_{n_{1}}^{\ast})\oplus\dots\oplus(J_{n_{p}}+\lambda J_{n_{p}}^{\ast})
\]
with selfdajoint summands.

\textit{AMS classification:} 15A63; 15A21; 15A22

\textit{Keywords:} Canonical matrices; Bilinear forms; Matrix pencils; Stable algorithms

\end{abstract}

\section{Introduction\label{Section Introduction}}

\label{s1}

All of the matrices that we consider are over a field or skew field
$\mathbb{F}$ with an involution $a\mapsto\widetilde{a}$, that is, a bijection
on $\mathbb{F}$ such that
\[
\widetilde{a+b}=\widetilde{a}+\widetilde{b},\qquad\widetilde{ab}=\widetilde
{b}\widetilde{a},\qquad\widetilde{\widetilde{a}}=a.
\]
If $\mathbb{F}$ is a field, the identity mapping $a\mapsto a$ on $\mathbb{F}$
is always an involution; over the complex field, complex conjugation
$a\mapsto\bar{a}$ is an involution. We refer to $\widetilde{a}$ as the
\emph{conjugate} of $a$.

The entry-wise conjugate of the transpose of a matrix $A=[a_{ij}]$ is denoted
by
\[
A^{\ast}=\widetilde{A}^{T}=[\widetilde{a}_{ji}]\text{.}%
\]
If there is a square nonsingular matrix $S$ such that $S^{\ast}AS=B$, then $A$
and $B$ are said to be *\emph{\negthinspace congruent}; if the involution on
$\mathbb{F}$ is the identity, i.e., $S^{\ast}=S^{T}$ and $S^{\ast}%
AS=S^{T}AS=B$, we say that $A$ and $B$ are \emph{congruent}. Congruence of
matrices (sometimes called \emph{T}-\emph{\negthinspace congruence}) is
therefore a special type of *congruence in which the involution is the
identity. Over the complex field with complex conjugation as the involution,
*congruence is sometimes called \emph{conjunctivity}. If $A$ is nonsingular,
we write $A^{-\ast}=(A^{\ast})^{-1}$.

Let
\[
J_{n}=%
\begin{bmatrix}
0 & 1 &  & 0\\
& 0 & \ddots & \\
&  & \ddots & 1\\
0 &  &  & 0
\end{bmatrix}
\]
denote the $n$-by-$n$ singular Jordan block.

For any $m$-by-$n$ matrix $A$ (that is, $A\in\mathbb{F}^{m\times n}$) we write
$N(A):=\{x\in\mathbb{F}^{n}:Ax=0\}$ (the \emph{null space} of $A$) and denote
its dimension by $\dim N(A)=\operatorname{nullity}A$. If $A$ is square, we
let
\[
A^{[k]}:=A\oplus\cdots\oplus A\text{\ (}k\text{ times).}%
\]

In Section \ref{Section Algorithm} we describe a constructive
\emph{regularization algorithm} that determines a \emph{regularizing
decomposition}
\begin{equation}
B\oplus J_{n_{1}}\oplus\cdots\oplus J_{n_{p}}\text{,}\quad\text{ }B\text{
nonsingular and }1\leq n_{1}\leq\cdots\leq n_{p}\label{c2}%
\end{equation}
to which a given square singular matrix $A$ is *congruent. The *congruence
class of $B$ (the \emph{regular part} of $A$ under *congruence) and the sizes
and multiplicities of the direct summands $J_{n_{1}},\dots,J_{n_{p}}$ (the
\emph{singular part} of $A$ under *congruence) are all uniquely determined by
the *congruence class of $A$. If $\mathbb{F}=\mathbb{C}$ (respectively,
$\mathbb{F}=\mathbb{R}$), the regularizing decomposition (\ref{c2}) can be
determined using only unitary (respectively, real orthogonal) transformations.
Our proof of the existence and uniqueness of the regularizing decomposition
(\ref{c2}) uses two geometric *congruence invariants that we discuss in
Section \ref{Section Invariants}: $\dim N(A)$ and $\dim(N(A^{\ast})\cap N(A))$.

In Section \ref{Section Decomposition} we exhibit a canonical \emph{sparse
form} that is *congruent to $A$ and determines the sizes and multiplicities of
the nilpotent direct summands in the regularizing decomposition (\ref{c2}).
The essential parameters of the sparse form are identical to those produced by
our regularization algorithm, which verifies the validity of the algorithm.
When $\mathbb{F}=\mathbb{C}$ or $\mathbb{R}$, we describe a reduced form
related to the canonical sparse form that can be achieved using only unitary
*congruences or $T$-congruences.

The regularization algorithm reduces the problem of determining a *congruence
canonical form to the nonsingular case. A complete set of *congruence
canonical forms (up to classification of Hermitian forms) when $\mathbb{F}$ is
a field with characteristic not equal to two is given in \cite[Theorem
3]{ser1}; see also \cite[Theorem 2]{hor-ser}. A nonalgorithmic reduction to
the nonsingular case was given by Gabriel for bilinear forms \cite{gab}; his
method was extended in \cite{Riehm} to sesquilinear forms, and in \cite{ser1}
to systems of sesquilinear forms and linear mappings. The form of the
regularizing decomposition (\ref{c2}) is implicit in the statement of
Proposition 3.1 in \cite{DSZ} when $\mathbb{F}$ is a field and the involution
is the identity; the construction employed in its proof does not suggest a
simple algorithm for identifying the parameters in (\ref{c2}).

If $A,B\in\mathbb{F}^{m\times n}$, then the polynomial matrix $A+\lambda B$ is
called a \emph{matrix pencil}. Two matrix pencils $A+\lambda B$ and
$A^{\prime}+\lambda B^{\prime}$ are said to be \emph{strictly equivalent} if
there exist nonsingular matrices $S$ and $R$ such that $S(A+\lambda
B)R=A^{\prime}+\lambda B^{\prime}$. Van Dooren \cite{doo} has given an
algorithm that uses only unitary transformations and for each complex matrix
pencil $A+\lambda B$ produces a strictly equivalent pencil
\begin{equation}
(C+\lambda D)\oplus(M_{1}+\lambda N_{1})\oplus\cdots\oplus(M_{l}+\lambda
N_{l})\label{eq2}%
\end{equation}
in which $C$ and $D$ are nonsingular constituents of the \emph{regular part}
$C+\lambda D$ of the Kronecker canonical form of $A+\lambda B$; each
$M_{i}+\lambda N_{i}$ is a singular direct summand of that canonical form
\cite[Section XII, Theorem 5]{gan}. Each $M_{i}+\lambda N_{i}$ has the form
\[
I_{n}+\lambda J_{n},\quad J_{n}+\lambda I_{n},\quad F_{n}+\lambda G_{n}%
,\quad\text{or}\quad G_{n}^{T}+\lambda F_{n}^{T}%
\]
for some $n$, in which
\[
F_{n}=%
\begin{bmatrix}
1 & 0 &  & 0\\
& \ddots & \ddots & \\
0 &  & 1 & 0
\end{bmatrix}
\quad\text{and}\quad G_{n}=%
\begin{bmatrix}
0 & 1 &  & 0\\
& \ddots & \ddots & \\
0 &  & 0 & 1
\end{bmatrix}
\quad\text{are $(n-1)$-by-$n$}.
\]
The direct sum \eqref{eq2} is a \emph{regularizing decomposition} of
$A+\lambda B$; $C+\lambda D$ is the \emph{regular part} of $A+\lambda B$. Van
Dooren's algorithm was extended to cycles of linear mappings with arbitrary
orientation of arrows in \cite{vvs2004}.

If Van Dooren's algorithm is used to construct a regularizing decomposition of
a *selfadjoint matrix pencil $A+\lambda A^{\ast}$, the regular part produced
need not be *selfadjoint. However, the regularizing decomposition of
$A+\lambda A^{\ast}$ that we describe in Section \ref{Section Pencil} always
produces a *selfadjoint regular part.

For any nonnegative integers $m$ and $n$, we denote the $m$-by-$n$ zero matrix
by $0_{mn}$, or by $0_{m}$ if $m=n$. The $n$-by-zero matrix $0_{n0}$ is
understood to represent the linear mapping $0\rightarrow{\mathbb{F}}^{n}$; the
zero-by-$n$ matrix $0_{0n}$ represents the linear mapping ${\mathbb{F}}%
^{n}\rightarrow0$; the zero-by-zero matrix $0_{0}$ represents the linear
mapping $0\rightarrow0$. For every $p\times q$ matrix $M_{pq}$ we have
\[
M_{pq}\oplus0_{m0}=%
\begin{bmatrix}
M_{pq} & 0\\
0 & 0_{m0}%
\end{bmatrix}
=%
\begin{bmatrix}
M_{pq} & 0_{p0}\\
0_{mq} & 0_{m0}%
\end{bmatrix}
=%
\begin{bmatrix}
M_{pq}\\
0_{mq}%
\end{bmatrix}
\]
and
\[
M_{pq}\oplus0_{0n}=%
\begin{bmatrix}
M_{pq} & 0\\
0 & 0_{0n}%
\end{bmatrix}
=%
\begin{bmatrix}
M_{pq} & 0_{pn}\\
0_{0q} & 0_{0n}%
\end{bmatrix}
=%
\begin{bmatrix}
M_{pq} & 0_{pn}%
\end{bmatrix}
.
\]
In particular,
\[
0_{p0}\oplus0_{0q}=0_{pq}%
\]
and $J_{n}^{[0]}=0_{0}$. Consistent with the definition of singularity, our
convention is that \emph{a zero-by-zero matrix is nonsingular}.

\section{The regularization algorithm\label{Section Algorithm}}

The first stage in our \emph{regularization algorithm} for a singular square
matrix $A$ is to reduce it by *congruence transformations in two steps that
construct a smaller matrix $A_{(1)}$ and integers $m_{1}$ and $m_{2}$ as follows:

\begin{description}
\item[Step 1] Choose a nonsingular $S$ such that the top rows of $SA$ are
linearly independent and the bottom $m_{1}$ rows are zero, then form
$(SA)S^{\ast}$ and partition it so that the upper left block is square:
\begin{align}
A\, &  \longmapsto\,SA=%
\begin{bmatrix}
A^{\prime}\\
0
\end{bmatrix}
\quad%
\begin{matrix}
\text{($S$ is nonsingular and the rows of}\\
\text{$A^{\prime}$ are linearly independent)}%
\end{matrix}
\nonumber\\
&  \longmapsto SAS^{\ast}=\,%
\begin{bmatrix}
A^{\prime}S^{\ast}\\
0
\end{bmatrix}
=\left[
\begin{array}
[c]{l|l}%
M & N\\\hline
0 & \;0_{m_{1}}%
\end{array}
\right]  \;%
\begin{array}
[c]{c}%
\text{($S$ is the same and}\\
\text{$M$ is square)}%
\end{array}
\text{ }\label{Step1}%
\end{align}
The integer $m_{1}$ is the nullity of $A$.

\item[Step 2] Choose a nonsingular $R$ such that the top rows of $RN$ are zero
and the bottom $m_{2}$ rows are linearly independent:
\begin{equation}
RN=\left[
\begin{array}
[c]{c}%
0\\
E
\end{array}
\right]  \quad%
\begin{array}
[c]{c}%
\text{(}R\text{ is nonsingular and the rows of}\\
\text{ }E\text{ are linearly independent)}%
\end{array}
\text{ }\label{Step2a}%
\end{equation}
The integer $m_{2}$ is the rank of $N$. Now perform a *congruence of $S^{\ast
}AS$ with $R\oplus I$:
\begin{align}
\left[
\begin{array}
[c]{c|c}%
M & N\\\hline
0 & 0
\end{array}
\right]   & \longmapsto\,(R\oplus I)\left[
\begin{array}
[c]{c|c}%
M & N\\\hline
0 & 0
\end{array}
\right]  (R\oplus I)^{\ast}\label{Step2}\\
& =\left[
\begin{array}
[c]{c|c}%
RMR^{\ast} & RN\\\hline
0 & 0
\end{array}
\right]  =\left[
\begin{array}
[c]{cc|c}%
A_{(1)} & B & 0\\
C & D & E\\\hline
\multicolumn{2}{c|}{0} & 0_{m_{1}}%
\end{array}
\right]  \hspace{-0.13in}%
\begin{array}
[c]{l}%
\\
\}m_{2}\\
\}m_{1}%
\end{array}
\label{Step2b}%
\end{align}
The block $RMR^{\ast}$ has been partitioned so that $D$ is $m_{2}$-by-$m_{2}
$. The size of the square matrix $A_{(1)}$ is strictly less than that of $A$.
\end{description}

If $A_{(1)}$ is nonsingular, the algorithm terminates. If $A_{(1)}$ is
singular, the second stage of the regularization algorithm is to perform the
two *congruences (\ref{Step1}) and (\ref{Step2}) on it and obtain integers
$m_{3}$ (the nullity of $A_{(1)}$) and $m_{4}$, and a square matrix $A_{(2)}$
whose size is strictly less than that of $A_{(1)}$.

The regularization algorithm proceeds from stage $k$ to stage $k+1$ by
performing the two *congruences (\ref{Step1}) and (\ref{Step2}) on the
singular square matrix $A_{(k-1)}$ to obtain $m_{2k-1}$, $m_{2k}$, and
$A_{(k)}$.\ When the algorithm terminates at stage $\tau$ with a square matrix
$A_{(\tau)}$ that is nonsingular, we have in hand a non-increasing sequence of
integers $m_{1}\geq m_{2}\geq\cdots\geq m_{2\tau-1}\geq m_{2\tau}\geq0$ and a
nonsingular matrix $A_{(\tau)}$. Our main result is that these data determine
the singular part of $A$ under *congruence as well as the *congruence class of
the regular part according to the following rule:

\begin{theorem}
\label{theorem regularization algorithm} Let $A$ be a given square singular
matrix over $\mathbb{F}$ and apply the regularization algorithm to it. Then
$A$ is *congruent to $A_{(\tau)}\oplus M$, in which $A_{(\tau)}$ is
nonsingular and%
\begin{equation}
M=J_{1}^{[m_{1}-m_{2}]}\oplus J_{2}^{[m_{2}-m_{3}]}\oplus J_{3}^{[m_{3}%
-m_{4}]}\oplus\cdots\oplus J_{2\tau-1}^{[m_{2\tau-1}-m_{2\tau}]}\oplus
J_{2\tau}^{[m_{2\tau}]}\text{.}\label{RegDecomp1}%
\end{equation}
The integers $m_{1}\geq m_{2}\geq\cdots\geq m_{2\tau-1}\geq m_{2\tau}\geq0$,
as well as the *congruence class of $A_{(\tau)}$, are uniquely determined by
the *congruence class of $A$.
\end{theorem}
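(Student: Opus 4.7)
The theorem has two parts: existence of a *congruence from $A$ to $A_{(\tau)}\oplus M$, and uniqueness of the integers $m_1,\ldots,m_{2\tau}$ and of the *congruence class of $A_{(\tau)}$. My plan is to prove existence by induction on the size of $A$ and uniqueness by identifying the $m_i$'s as *congruence invariants of $A$ and its derived matrices.

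For existence, the base case is when $A$ is nonsingular (including $0_0$), where the statement holds with $\tau=0$ and $M$ empty. For the inductive step, one stage of the algorithm brings $A$ by *congruence to the matrix in (\ref{Step2b}). Because $E$ has full row rank $m_2$, further *congruences are available: adding a suitable multiple of the last block row to the middle one (and symmetrically on columns) clears $D$, and a change of basis in the last block normalizes $E$ to $[I_{m_2}\mid 0_{m_2,m_1-m_2}]$. This splits off the trailing $0_{m_1-m_2}=J_1^{[m_1-m_2]}$ as a direct summand. Applying the inductive hypothesis to $A_{(1)}$ writes it as $A_{(\tau)}\oplus M'$ with $M'=J_1^{[m_3-m_4]}\oplus\cdots\oplus J_{2\tau-2}^{[m_{2\tau}]}$, and transporting this by a global *congruence of the form $T\oplus I\oplus I$ moves that splitting into the upper-left block. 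The final combinatorial step is to verify that the $m_2$ ``slots'' of the middle block split into $m_3$ that wrap the Jordan summands of $M'$ (lifting each $J_k$ there to $J_{k+2}$ through the surrounding coupling blocks) and $m_2-m_3$ that produce fresh $J_2$ summands; matching multiplicities with $M'$ then yields \eqref{RegDecomp1}.

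For uniqueness, observe first that $\dim N(A)$ and $\dim(N(A^*)\cap N(A))$ are *congruence invariants: if $\widetilde A=S^*AS$ with $S$ nonsingular, then $N(\widetilde A)=S^{-1}N(A)$ and $N(\widetilde A^*)=S^{-1}N(A^*)$. A direct computation on the form (\ref{Step2b}) gives $m_1=\dim N(A)$ and $m_1-m_2=\dim(N(A^*)\cap N(A))$, so $m_1$ and $m_2$ are determined by the *congruence class of $A$. Propagation to higher indices and to $A_{(\tau)}$ follows from a cancellation argument: any two runs of the algorithm on $A$ produce decompositions $A_{(1)}\oplus M_1$ and $A'_{(1)}\oplus M_1$ of $A$ with the same $M_1$ (since $m_1,m_2$ fix it), so Krull--Schmidt-type uniqueness of indecomposable summands under *congruence, referenced in \cite{ser1}, allows cancellation to give $A_{(1)}$ *congruent to $A'_{(1)}$. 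Iterating shows that all $m_i$ and the *congruence class of $A_{(\tau)}$ are invariants of $A$.

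The hardest step is the ``stretch by two'' combinatorial identification in the existence argument. The coupling block $B$ in (\ref{Step2b}) cannot be cleared directly when $A_{(1)}$ is singular, since $B$ need not lie in the image of $A_{(1)}$; tracking $B$ through the inductive wrap and verifying that it precisely lengthens each Jordan summand $J_k$ of $A_{(1)}$'s decomposition to $J_{k+2}$ in the decomposition of $A$ is the main work. The canonical sparse form constructed in Section \ref{Section Decomposition} realizes this identification explicitly, and the proof is completed by checking that the algorithm and the sparse form produce matching parameters.
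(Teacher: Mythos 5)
Your overall architecture matches the paper's: identify $m_1=\dim N(A)$ and $m_1-m_2=\dim(N(A^*)\cap N(A))$ as *congruence invariants, then argue inductively that the coupling to $A_{(1)}$ lengthens its Jordan summands by two. However, there are two genuine gaps.

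First, the uniqueness argument rests on a false premise. You assert that one stage of the algorithm yields a direct-sum decomposition $A\cong A_{(1)}\oplus M_1$ with $M_1$ fixed by $m_1,m_2$, and then cancel $M_1$. But after one stage $A$ is only *congruent to the coupled block form (\ref{Step2b}); the block $B$ cannot be removed when $A_{(1)}$ is singular --- which is exactly the case at issue --- so $A$ is not *congruent to $A_{(1)}\oplus(\text{anything})$ at that stage. Indeed your own existence sketch contradicts this: the singular part of $A$ is not that of $A_{(1)}$ with $J_1^{[m_1-m_2]}\oplus J_2^{[m_2]}$ adjoined, because the summands of $A_{(1)}$ get stretched (the multiplicity of $J_2$ in $A$ is $m_2-m_3$, not $m_2$). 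The paper's Lemma \ref{lemma reduced form} avoids this entirely: it compares two coupled forms $M$ and $\underline{M}$ directly and shows that the full-row-rank hypotheses force any *congruence between them to be block upper triangular, whence $R_{11}A_{(1)}R_{11}^{\ast}=\underline{A}_{(1)}$. A Krull--Schmidt cancellation could be applied to the \emph{final} decompositions $A_{(\tau)}\oplus M$, but that is a heavier tool than the paper needs and is not what you wrote.

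Second, the central step of your existence argument --- that transporting the decomposition of $A_{(1)}$ into (\ref{Step2b}) lengthens each Jordan summand $J_k$ of $A_{(1)}$ to $J_{k+2}$ and leaves $m_2-m_3$ fresh $J_2$'s --- is asserted rather than proved; you explicitly defer it to the sparse form of Section \ref{Section Decomposition}, which is precisely where the paper does the work. The paper's route is to iterate Lemmas \ref{lemma C=D=0} and \ref{lemma singular A(1)} to reach the block-bidiagonal nilpotent matrix $N$ of (\ref{regularizing decomposition_1}) (the delicate point being Step 2 of Lemma \ref{lemma singular A(1)}, where the coupling blocks $B_1,B_2$ are cleared using full row rank of the block rows above them, and the resulting damage is repaired using the zero block row at the bottom), and then to read off the Jordan structure of $N$ all at once from $\operatorname{rank}N^{k}=m_{k+1}+\cdots+m_{2\tau}$, finishing with a permutation similarity (hence a *congruence) from $N$ to $M$. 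Supplying either that reduction or an honest normalization of the coupling block restricted to the rows of the singular part of $A_{(1)}$ is the missing content; without it the induction does not close.
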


In the next section we offer a geometric interpretation for the integers
$m_{i}$ in (\ref{RegDecomp1}) and explain why they and the *congruence class
of each of the square matrices $A_{(k)}$ produced by the regularization
algorithm are *congruence invariants of $A$. Implicit in the regularization
algorithm are certain reductions of $A$ by *congruences that we refine in
order to explain why the regularizing decomposition in (\ref{RegDecomp1}) is valid.

The nonsingular matrices $S$ and $R$ in the two *congruence steps of the
regularization algorithm can always be constructed with elementary row
operations. For the complex (respectively, real) field, it can be useful for
numerical implementation to know that $S$ and $R$ may be chosen to be unitary
(respectively, real orthogonal).

\begin{theorem}
\label{Theorem Unitary transformations}Let $A$ be a given square singular
complex (respectively, real) matrix. The regularizing decomposition
(\ref{RegDecomp1}) of $A$ can be determined using only unitary (respectively,
real orthogonal) transformations.
\end{theorem}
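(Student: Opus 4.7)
The plan is to reduce Theorem \ref{Theorem Unitary transformations} to an existence statement about unitary (or real orthogonal) replacements for the matrices $S$ and $R$ inside Step 1 and Step 2 of the regularization algorithm. By Theorem \ref{theorem regularization algorithm}, the output of the algorithm is a regularizing decomposition for \emph{any} valid choice of nonsingular $S$ and $R$ satisfying the rank/nullity conditions prescribed in (\ref{Step1}) and (\ref{Step2a}); the integers $m_i$ and the *congruence class of $A_{(\tau)}$ are invariants of $A$, independent of the choice. Thus it suffices to show that over $\mathbb{C}$ (respectively, $\mathbb{R}$), both $S$ and $R$ can be chosen unitary (respectively, real orthogonal) at every stage, and that the resulting composite change of basis is itself unitary.

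For Step 1, given a square matrix $A$ of size $n$ with $\operatorname{nullity}A = m_1$, the row-nullity of $A$ also equals $m_1$, so $\dim N(A^{\ast})=m_1$. Choose an orthonormal basis $v_1,\dots,v_{m_1}$ of $N(A^{\ast})$ and extend it to an orthonormal basis $u_1,\dots,u_{n-m_1},v_1,\dots,v_{m_1}$ of $\mathbb{F}^n$; set $S$ to be the matrix whose rows are $u_1^{\ast},\dots,u_{n-m_1}^{\ast},v_1^{\ast},\dots,v_{m_1}^{\ast}$. Then $S$ is unitary (or real orthogonal when $\mathbb{F}=\mathbb{R}$) and $SA$ has its bottom $m_1$ rows equal to zero with its top $n-m_1$ rows linearly independent, which is exactly the form required in (\ref{Step1}). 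Equivalently, $S$ arises as the unitary factor of a $QR$ decomposition of $A$. The same construction applied to $N$ provides a unitary $R$ satisfying (\ref{Step2a}): place an orthonormal basis of $N(N^{\ast})$ in the top rows of $R$ and an orthonormal basis of the column space of $N$ in the bottom rows.

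The iterative step is handled by the usual direct-sum lift: after stage $k$, the transformation applied at stage $k+1$ has the form $T_{k+1}\oplus I_{d_k}$, where $d_k=m_1+\cdots+m_{2k}$ is the total size of the singular summands already split off; since a direct sum of unitary matrices is unitary, the entire composite *congruence bringing $A$ to $A_{(\tau)}\oplus M$ is unitary (or real orthogonal). The main point one must check is that the smaller matrix $A_{(k+1)}$ extracted inside the block $RMR^{\ast}$ in (\ref{Step2b}) is obtained by the same construction applied to something *congruent to $A_{(k+1)}$, so the recursive structure goes through verbatim under the unitary restriction. I expect no serious obstacle beyond correctly bookkeeping this lift: all the nontrivial content is encapsulated in the existence of the unitary $QR$-type factorizations of the previous paragraph, and the invariance statement of Theorem \ref{theorem regularization algorithm} ensures that the unitary variant produces the same singular part $M$ and a representative of the same *congruence class for the regular part $A_{(\tau)}$.
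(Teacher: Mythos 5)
Your proposal is correct and takes essentially the same approach as the paper: the entire content of the theorem is to produce unitary (respectively, real orthogonal) choices of $S$ and $R$ in Steps 1 and 2 and then invoke the invariance statement, and where the paper extracts these from singular value decompositions of $A$ and of $N$ (composed with a reversal permutation), you extract them from a QR-type extension of a Hermitian-orthonormal basis of the left null space --- the two constructions are interchangeable. The one point worth making explicit (the paper does so in its case (b) by taking $S=\bar U$) is that for the identity involution on $\mathbb{C}$ the rows of $S$ must be the plain transposes of Hermitian-orthonormal vectors so that $S$ is still unitary while $S^{\ast}AS=S^{T}AS$ annihilates the intended rows; your recipe does this correctly provided $u_i^{\ast}$, $v_i^{\ast}$ are read with the involution in force while ``orthonormal'' always refers to the standard Hermitian inner product.
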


\begin{proof}
(a) Suppose $\mathbb{F=C}$ with complex conjugation as the involution. Let
$A=U^{\ast}\Sigma Z$ be a singular value decomposition in which $\Sigma
=\Sigma_{1}\oplus0_{m_{1}}$, $\Sigma_{1}$ is positive diagonal, and $U$ and
$Z$ are unitary. The choice $S=U$ achieves the required reduction in Step 1.
In Step 2, let $N=\hat{V}^{\ast}\hat{\Sigma}W$ be a singular value
decomposition in which $\hat{V}$ and $W$ are unitary, $\hat{\Sigma}%
=\hat{\Sigma}_{1}\oplus0$, and $\hat{\Sigma}_{1}$ is positive diagonal and
$m_{2}$-by-$m_{2}$. Let%
\[
P=\left[
\begin{array}
[c]{ccc}
&  & 1\\
&
{\text{\raisebox{-2.2pt}{$\cdot\,$} \raisebox{1.7pt}{$\cdot$}\raisebox
{5.6pt}{$\,\cdot$}}}%
& \\
1 &  &
\end{array}
\right]
\]
be the reversal matrix whose size is the same as that of $\hat{V}$. Then
$N=(P\hat{V})^{\ast}(P\hat{\Sigma})W$, $(P\hat{\Sigma})W$ has the block form
(\ref{Step2a}), and $V:=P\hat{V}$ is unitary, so we may take $R=V$ in Step 2.
Thus, $A$ is unitarily *congruent (unitarily similar) to a block matrix of the
form (\ref{Step2b}) in which $D$ is square and each of $E$ and $[A_{(1)}\;B]$
has linearly independent rows.

(b) Suppose $\mathbb{F=C}$ with the identity involution. In Step 1, choose
$S=\bar{U}$ from (a). In Step 2, choose $R=\bar{V}$ from (a). Thus, $A$ is
unitarily $T$-congruent to a block matrix of the form (\ref{Step2b}) in which
$D$ is square and each of $E$ and $[A_{(1)}\;B]$ has linearly independent rows.

(c) Suppose $\mathbb{F=R}$ with the identity involution. Proceed as in (a),
choosing $U$, $\hat{V}$, and $W$ to be real orthogonal in the two singular
value decompositions. Thus, $A$ is real orthogonally congruent to a block
matrix of the form (\ref{Step2b}) in which $D$ is square and each of $E$ and
$[A_{(1)}\;B]$ has linearly independent rows.
\end{proof}

\medskip The regularizing algorithm tells how to construct a sequence of pairs
of transformations of the square matrices $A_{(k)}$ that are sufficient to
determine the regularizing decomposition of $A$. Implicit in these
transformations is a sequence of pairs of *congruences that reduce $A$ in
successive stages. After the first stage, the *congruences reduce $A$ to the
form (\ref{Step2b}). After the second stage, if we were to carry out the
*congruences we would obtain a matrix of the form%
\begin{equation}
\left[
\begin{array}
[c]{ccccc}%
A_{(2)} & \ast & 0 & \ast & 0\\
\ast & \ast & \blacksquare & \ast & 0\\
0 & 0 & 0 & \blacksquare & 0\\
\ast & \ast & \ast & \ast & \blacksquare\\
0 & 0 & 0 & 0 & 0
\end{array}
\right]  \hspace{-0.13in}%
\begin{array}
[c]{l}%
\\
\}m_{4}\\
\}m_{3}\\
\}m_{2}\\
\}m_{1}%
\end{array}
\label{Stage 2}%
\end{equation}
in which the diagonal blocks are square, the * blocks are not necessarily
zero, and each $\blacksquare$ block has linearly independent rows. Theorem
\ref{Theorem Unitary transformations} ensures that if $A$ is complex, then
there are unitary matrices $U$ and $V$ such that each of $U^{\ast}AU$ and
$V^{T}AV$ has the form (\ref{Stage 2}), with possibly different values for the
parameters $m_{i}$. If $A$ is real, there is a real orthogonal $Q$ such that
$Q^{T}AQ$ has the form (\ref{Stage 2}).

\section{*Congruence Invariants and a Reduced Form\label{Section Invariants}}

Throughout this section, $A\in\mathbb{F}^{m\times m}$ and $S$ is a nonsingular
matrix. Of course, $\operatorname{nullity}A=\operatorname{nullity}S^{\ast}AS$,
so nullity is a *congruence invariant. The relationships%
\begin{equation}
N(S^{\ast}AS)=S^{-1}N(A)\quad\text{and\quad}N(S^{\ast}A^{\ast}S)=S^{-1}%
N(A^{\ast})\label{invariant0}%
\end{equation}
between the null spaces of $A$ and $S^{\ast}AS$, and those of $A^{\ast}$ and
$S^{\ast}A^{\ast}S$, imply that%
\begin{equation}
N(S^{\ast}A^{\ast}S)\cap N(S^{\ast}AS)=S^{-1}\left(  N(A^{\ast})\cap
N(A)\right)  \text{.}\label{invariant1}%
\end{equation}
We refer to $\zeta:=$ $\dim N(A^{\ast})\cap N(A)$ as the \emph{*normal
nullity} of $A$. We let $\nu:=\operatorname{nullity}A$, refer to $\kappa
:=\nu-\zeta$ as the \emph{*non-normal nullity} of $A$, and let $\rho
=m-\kappa-\nu$.

It follows from (\ref{invariant0}) and (\ref{invariant1}) that $\nu$, $\zeta$,
$\kappa$, and $\rho$ are *congruence invariants. Because%
\begin{equation}
N(A^{\ast})\cap N(A)=N\left(  \left[
\begin{array}
[c]{c}%
A\\
A^{\ast}%
\end{array}
\right]  \right)  \text{,}\label{invariant2}%
\end{equation}
$\nu$ and $\zeta$ (and hence also $\kappa$ and $\rho$) can be computed using
elementary row operations.

The parameter $m_{1}$ produced by the regularization algorithm is the nullity
of $A$, so it is a *congruence invariant: $m_{1}=\nu$.

The parameter $m_{2}$ produced by the regularization algorithm is the rank of
the block $N$ in (\ref{Step1}). Since $N$ has $m_{1}$ columns and full row
rank, its nullity is $m_{1}-m_{2}$. Suppose $z\in\mathbb{F}^{m_{1}}$ and
$Nz=0$, let $y^{\ast}=[0\;z^{\ast}]$, and let $\mathcal{A}=SAS^{\ast}$ denote
the block matrix in (\ref{Step1}). Then $\mathcal{A}y=0$ and $y^{\ast
}\mathcal{A}=0$ so $\zeta=\dim(N(\mathcal{A}^{\ast})\cap N(\mathcal{A}%
))=\operatorname{nullity}N=m_{1}-m_{2}$ and hence $m_{1}-m_{2}=\zeta$ is the
*normal nullity of $A$. This means that $m_{2}=m_{1}-\zeta=\nu-\zeta=\kappa$
is the *non-normal nullity of $A$, so $m_{2}$ is also a *congruence invariant.

The following lemma ensures that the *congruence class of the square matrix
$A_{\left(  1\right)  }$ in (\ref{Step2b}) is also a *congruence invariant.

\begin{lemma}
\label{lemma reduced form} Suppose that a singular square matrix $A$ is
*\negthinspace congruent to
\[
M=\left[
\begin{array}
[c]{ccc}%
A_{\left(  1\right)  } & B & 0\\
C & D & E\\
0 & 0 & 0_{\nu}%
\end{array}
\right]  \quad\text{and also to }\quad\underline{M}=\left[
\begin{array}
[c]{ccc}%
\underline{A}_{\left(  1\right)  } & \underline{B} & 0\\
\underline{C} & \underline{D} & \underline{E}\\
0 & 0 & 0_{\underline{\nu}}%
\end{array}
\right]  \text{,}%
\]
in which $D$ is $\kappa$-by-$\kappa$, $\underline{D}$ is \underline{$\kappa$%
}-by-\underline{$\kappa$}, and each of $E$, $\underline{E}$, $[A_{\left(
1\right)  }\;B]$, and $[\underline{A}_{\left(  1\right)  }\;\underline{B}]$
has linearly independent rows. Then $\nu=\underline{\nu}$, $\kappa
=\underline{\kappa}$, and $A_{\left(  1\right)  }$ is *congruent to
$\underline{A}_{\left(  1\right)  }$, that is, $\nu$, $\kappa$, $\rho$, and
the *congruence class of the $\rho$-by-$\rho$ matrix $A_{\left(  1\right)  }$
are *congruence invariants of $A$.
\end{lemma}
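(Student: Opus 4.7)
The plan is first to verify that the numerical parameters $\nu$ and $\kappa$ must agree for $M$ and $\underline{M}$, then to reduce both matrices by *congruences that leave the $A_{(1)}$ block alone to a normalised shape, and finally to read off the *congruence of $A_{(1)}$ and $\underline{A}_{(1)}$ from the block constraints that a *congruence between the normalised matrices entails.

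For the first step, I would compute $\operatorname{nullity} M$ and $\dim(N(M^{\ast})\cap N(M))$ directly from the block form of $M$. The hypothesis that $[A_{(1)}\ B]$ has linearly independent rows forces every vector of $N(M^{\ast})$ to sit in the last $\nu$ coordinates, and the hypothesis that $E$ has linearly independent rows then pins $N(M^{\ast})\cap N(M)$ down to the $(\nu-\kappa)$-dimensional subspace $\{(0,0,y):Ey=0\}$; this is essentially the calculation used for the algorithm in the paragraph preceding the lemma. The same applies to $\underline{M}$, and since both quantities are *congruence invariants of $A$, we obtain $\nu=\underline{\nu}$, $\kappa=\underline{\kappa}$, and hence $\rho=\underline{\rho}$.

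For the second step, using that $E\colon\mathbb{F}^{\nu}\to\mathbb{F}^{\kappa}$ is surjective, the *congruence
\[
\begin{bmatrix} I & 0 & 0\\ 0 & I & 0\\ Z & W & I\end{bmatrix}^{\ast} M \begin{bmatrix} I & 0 & 0\\ 0 & I & 0\\ Z & W & I\end{bmatrix}
\]
fixes $A_{(1)}$, $B$, and $E$ while replacing $C,D$ by $C+EZ,D+EW$, so suitable $Z,W$ kill $C$ and $D$. A further *congruence by $I\oplus I\oplus R$ keeps $A_{(1)}$ and $B$ intact and normalises $E$ to $[I_{\kappa}\ 0]$. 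Applying the same reductions to $\underline{M}$, I may assume both matrices have the common normalised form with row and column blocks of sizes $(\rho,\kappa,\kappa,\nu-\kappa)$.

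For the third step, let $P$ be nonsingular with $P^{\ast}MP=\underline{M}$ and partition $P=[P_{ij}]_{i,j=1}^{4}$ accordingly. The identities $P\cdot N(\underline{M}^{\ast})=N(M^{\ast})$ and $P\cdot(N(\underline{M}^{\ast})\cap N(\underline{M}))=N(M^{\ast})\cap N(M)$ following from \eqref{invariant0} and \eqref{invariant1} force $P_{13}=P_{14}=P_{23}=P_{24}=0$ and $P_{34}=0$, so $P$ is block lower triangular in the coarser partition $(\rho+\kappa,\kappa,\nu-\kappa)$ and in particular $P_{33}$ is nonsingular. Reading off the $(1,3)$ and $(2,3)$ block entries of $P^{\ast}MP=\underline{M}$ yields $P_{21}^{\ast}P_{33}=0$ and $P_{22}^{\ast}P_{33}=I_{\kappa}$, whence $P_{21}=0$ and $P_{22}$ is nonsingular; then $P_{11}$ is nonsingular as well because the leading $(\rho+\kappa)$-block of $P$ is now block upper triangular with $P_{11}$ and $P_{22}$ on its diagonal and must be invertible. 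The $(1,1)$ block of the equation then collapses to $\underline{A}_{(1)}=P_{11}^{\ast}A_{(1)}P_{11}$, which is the required *congruence. The main obstacle I anticipate is the block bookkeeping in this last step: translating each null-space invariance relation into explicit vanishing conditions on the $P_{ij}$ and picking out the few block entries of $P^{\ast}MP=\underline{M}$ that actually pin down $P_{11}$; once $P$ is known to have the block lower triangular shape above, the *congruence drops out of a single block equation.
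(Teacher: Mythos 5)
Your proof is correct and follows essentially the same route as the paper's: both arguments show that any *congruence carrying $M$ to $\underline{M}$ must be block triangular compatibly with the partition (you via the null-space invariance relations \eqref{invariant0}--\eqref{invariant1}, the paper via full-row-rank arguments applied to the block equation $SM=\underline{M}S^{-\ast}$), and then read off $\underline{A}_{(1)}=P_{11}^{\ast}A_{(1)}P_{11}$ from the $(1,1)$ block. Your preliminary normalization of $C$, $D$, and $E$ is valid but not needed for this lemma; the paper performs that reduction only later, in Lemma~\ref{lemma C=D=0}.
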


\begin{proof}
The form of $M$ ensures that $\nu$ is its nullity and that $\kappa$ is its
*non-normal nullity; $\underline{\nu}$ is the nullity of $\underline{M}$ and
$\underline{\kappa}$ is its *non-normal nullity. Since $M$ and $\underline{M}$
are *congruent to $A$ and hence to each other, their nullities and *non-normal
nullities are the same, so $\nu=\underline{\nu}$ and $\kappa=\underline
{\kappa}$.

Let%
\[
\hat{M}=\left[
\begin{array}
[c]{ccc}%
A_{(1)} & B & 0\\
C & D & E
\end{array}
\right]  \text{\ and\ }\underline{\hat{M}}=\left[
\begin{array}
[c]{ccc}%
\underline{A}_{\left(  1\right)  } & \underline{B} & 0\\
\underline{C} & \underline{D} & \underline{E}%
\end{array}
\right]  \text{.}%
\]
If $S=[S_{ij}]_{i,j=1}^{2}$ is nonsingular, $S_{22}$ is $\nu$-by-$\nu$, and
$SMS^{\ast}=$ $\underline{M}$, then
\[
SM=%
\begin{bmatrix}
S_{11} & S_{12}\\
S_{21} & S_{22}%
\end{bmatrix}
\left[
\begin{array}
[c]{c}%
\hat{M}\\
0
\end{array}
\right]  =\left[
\begin{array}
[c]{c}%
\star\\
S_{21}\hat{M}%
\end{array}
\right]  =\left[
\begin{array}
[c]{c}%
\star\\
0
\end{array}
\right]  =\underline{M}S^{-\ast}\text{,}%
\]
so $S_{21}\hat{M}=0$. Full row rank of $\hat{M}$ ensures that $S_{21}=0$ and
hence both $S_{11}$ and $S_{22}$ are nonsingular. If we write $S_{11}%
=[R_{ij}]_{i,j=1}^{2}$, in which $R_{22}$ is $\kappa$-by-$\kappa$, then
equating the $1,2$ blocks of $SMS^{\ast}$ and $\underline{M}$ tells us that
\[
S_{11}\left[
\begin{array}
[c]{c}%
0\\
E
\end{array}
\right]  =\left[
\begin{array}
[c]{c}%
R_{12}E\\
\star
\end{array}
\right]  =\left[
\begin{array}
[c]{c}%
0\\
\star
\end{array}
\right]  =\left[
\begin{array}
[c]{c}%
0\\
\underline{E}%
\end{array}
\right]  \left(  S_{22}\right)  ^{-\ast}\text{,}%
\]
which ensures that $R_{12}=0$, $R_{11}$ and $R_{22}$ are nonsingular, and
$R_{11}A_{\left(  1\right)  }R_{11}^{\ast}=\underline{A}_{\left(  1\right)  }$.
\end{proof}

\medskip Lemma \ref{lemma reduced form}, identification of the parameters in
the first stage of the regularization algorithm as *congruence invariants
($m_{1}=\nu$ and $m_{2}=\kappa$), and an induction argument ensure that at
each stage $k=1,2,...$ of the algorithm the *congruence class of the square
matrix $A_{(k)}$, the integers $m_{2k-1}$ (the nullity of $A_{(k-1)}$) and
$m_{2k}$ (the *non-normal nullity of $A_{(k-1)}$), the number of stages $\tau$
in the algorithm until it terminates, and the *congruence class of the final
nonsingular square matrix $A_{(\tau)}$ are all uniquely determined by the
*congruence class of $A$. All that remains to be shown is that these data
determine the regularizing decomposition of $A$ according to the rule in
Theorem \ref{theorem regularization algorithm}.

The block matrix (\ref{Step2b}) can be reduced to a more sparse form by
*congruence if $m_{2}>0$: the block $E$ may be taken to be $[I_{m_{2}}\;0]$
and the blocks $C$ and $D$ may be taken to be zero. To achieve these
reductions, is is useful to realize that if $A\rightarrow AS$ adds linear
combinations of a set of columns of $A$ with index set $\alpha$ to certain
columns, and if the rows of $A$ with index set $\alpha$ are all zero, then
$S^{\ast}A=A$, so $S^{\ast}AS=AS$.

\begin{lemma}
\label{lemma C=D=0} If a singular square matrix $A$ is *\negthinspace
congruent to a block matrix $\mathcal{A}$ of the form (\ref{Step2b}) in which
$m_{2}>0$, $D$ is $m_{2}$-by-$m_{2}$, and $E$ has linearly independent rows,
then it is *congruent to%
\begin{equation}
\left[
\begin{array}
[c]{ccc}%
A_{\left(  1\right)  } & B & 0\\
0 & 0_{m_{2}} & [I_{m_{2}}\;0]\\
0 & 0 & 0_{m_{1}}%
\end{array}
\right]  \text{.}\label{C=D=0_1}%
\end{equation}

\end{lemma}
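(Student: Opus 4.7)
The plan is to reach the sparse form (\ref{C=D=0_1}) by two successive *congruences applied to $\mathcal{A}$: first normalize $E$ to $[I_{m_2}\;0]$; second, exploiting the fact that the bottom $m_1$ rows have then become zero, kill the $C$ and $D$ blocks by pure column operations, as justified by the observation recorded in the paragraph immediately preceding the lemma.

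For the first step, because $E$ is $m_2$-by-$m_1$ with linearly independent rows, standard column reduction produces a nonsingular $T\in\mathbb{F}^{m_1\times m_1}$ with $ET=[I_{m_2}\;0]$. Let $\rho$ denote the size of $A_{(1)}$ and set $S_1=I_{\rho}\oplus I_{m_2}\oplus T$. Right multiplication of $\mathcal{A}$ by $S_1$ affects only its last block of columns, sending $E$ to $[I_{m_2}\;0]$, while left multiplication by $S_1^{*}=I_{\rho}\oplus I_{m_2}\oplus T^{*}$ affects only the bottom $m_1$ rows of $\mathcal{A}$, which are zero. Hence $S_1^{*}\mathcal{A}S_1$ coincides with $\mathcal{A}$ except that $E$ has been replaced by $[I_{m_2}\;0]$.

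For the second step, the bottom $m_1$ rows of $S_1^{*}\mathcal{A}S_1$ vanish, so by the observation preceding the lemma any nonsingular $S_2$ that only adds multiples of the last $m_1$ columns to other columns satisfies $S_2^{*}(S_1^{*}\mathcal{A}S_1)S_2=(S_1^{*}\mathcal{A}S_1)S_2$. The identities in the $[I_{m_2}\;0]$ block mean that for $i=1,\dots,m_2$ column $\rho+m_2+i$ of $S_1^{*}\mathcal{A}S_1$ is the standard basis vector $e_{\rho+i}$, whose single nonzero entry sits in row $\rho+i$. Subtracting $C_{ij}$ times column $\rho+m_2+i$ from column $j$ for each $1\le j\le\rho$ therefore kills the $(i,j)$ entry of $C$ without touching $A_{(1)}$ or the bottom zero block; similarly, subtracting $D_{ik}$ times column $\rho+m_2+i$ from column $\rho+k$ for each $1\le k\le m_2$ kills the $(i,k)$ entry of $D$ without touching $B$. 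The last $m_1$ columns are themselves unaffected. Packaging all of these operations into a single nonsingular $S_2$ of the required shape, the composite *congruence $(S_1S_2)^{*}\mathcal{A}(S_1S_2)$ produces (\ref{C=D=0_1}). The only real content of the argument is the zero-row-column observation that licenses step two; the rest is routine block bookkeeping.
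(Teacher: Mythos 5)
Your proof is correct and follows essentially the same route as the paper's: first a *congruence by $I\oplus T$ that normalizes $E$ to $[I_{m_2}\;0]$ using the zero bottom rows, then a *congruence that acts as pure column operations adding multiples of the last $m_1$ columns to annihilate $C$ and $D$ (the paper packages your entrywise operations into the single block matrix $S=\left[\begin{smallmatrix}I&0\\X&I\end{smallmatrix}\right]$ with $X=-\left[\begin{smallmatrix}C&D\\0&0\end{smallmatrix}\right]$). No gaps.
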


\begin{proof}
Since $\operatorname{rank}E=m_{2}$, there is a nonsingular $V$ such that
$EV=[I_{m_{2}}\;0]$. For $S=I_{m-m_{1}}\oplus V$ we have%
\[
S^{\ast}\mathcal{A}S=\mathcal{A}S=\left[
\begin{array}
[c]{ccc}%
A_{\left(  1\right)  } & B & 0\\
C & D & [I_{m_{2}}\;0]\\
0 & 0 & 0_{m_{1}}%
\end{array}
\right]  :=\mathcal{A}^{\prime}\text{.}%
\]
Then, for%
\[
S=\left[
\begin{array}
[c]{cc}%
I_{m-m_{1}} & 0\\
X & I_{m_{1}}%
\end{array}
\right]  \text{\ and\ }X=-\left[
\begin{array}
[c]{cc}%
C & D\\
0 & 0
\end{array}
\right]  \text{,}%
\]
$\mathcal{A}^{\prime}S=S^{\ast}\mathcal{A}^{\prime}S$ has the form
(\ref{C=D=0_1}).
\end{proof}

\medskip A block matrix of the form (\ref{C=D=0_1}) is said to be a
\emph{*congruence reduced form} of $A$ if it is *congruent to $A$, $A_{\left(
1\right)  }$ is square, and $[A_{\left(  1\right)  }\;B]$ has linearly
independent rows. There are four possibilities for the $\rho$-by-$\rho$ matrix
$A_{\left(  1\right)  }$ in a *congruence reduced form of $A$:

\begin{itemize}
\item $\rho=0$: Then $A$ is *congruent to%
\[
\mathcal{A}=\left[
\begin{array}
[c]{cc}%
0_{m_{2}} & [I_{m_{2}}\;0]\\
0 & 0_{m_{1}}%
\end{array}
\right]  \text{.}%
\]
Since $\operatorname{rank}\mathcal{A}=m_{2}$ and $\mathcal{A}^{2}=0$, its
Jordan Canonical Form contains $m_{2}$ blocks $J_{2}$ and $m_{1}-m_{2}$ blocks
$J_{1}$. But $\mathcal{A}$ is similar to its Jordan Canonical Form via a
permutation similarity, which is a *congruence, so $J_{1}^{[m_{1}-m_{2}%
]}\oplus J_{2}^{[m_{2}]}$ is the regularizing decomposition for $A$.

\item $\rho>0$ and $A_{(1)}=0_{\rho}$, so $m_{3}=\operatorname{nullity}%
A_{(1)}=\rho$: $A$ is *congruent to%
\[
\mathcal{A}=\left[
\begin{array}
[c]{ccc}%
0_{m_{3}} & B & 0\\
0 & 0_{m_{2}} & [I_{m_{2}}\;0]\\
0 & 0 & 0_{m_{1}}%
\end{array}
\right]
\]
in which $B$ has full row rank. There is a nonsingular $V$ such that
$BV=[I_{m_{3}}\;0]$, so if we let $S=I_{m_{3}}\oplus V\oplus I_{m_{1}}$, we
have%
\[
S^{\ast}\mathcal{A}S=\left[
\begin{array}
[c]{ccc}%
0_{m_{3}} & [I_{m_{3}}\;0] & 0\\
0 & 0_{m_{2}} & [V^{\ast}\;0]\\
0 & 0 & 0_{m_{1}}%
\end{array}
\right]  :=R\text{.}%
\]
Now let $S=I_{m_{3}+m_{2}}\oplus(V^{-\ast}\oplus I_{m_{1}-m_{2}})$ and compute%
\[
S^{\ast}RS=\left[
\begin{array}
[c]{ccc}%
0_{m_{3}} & [I_{m_{3}}\;0] & 0\\
0 & 0_{m_{2}} & [I_{m_{2}}\;0]\\
0 & 0 & 0_{m_{1}}%
\end{array}
\right]  :=N\text{.}%
\]
Then $\operatorname{rank}N=m_{3}+m_{2}$, $\operatorname{rank}N^{2}=m_{3}$, and
$N^{3}=0$, so the Jordan Canonical Form of $N$ is $J_{1}^{[m_{1}-m_{2}]}\oplus
J_{2}^{[m_{2}-m_{3}]}\oplus J_{3}^{[m_{3}]}$, which is the regularizing
decomposition for $A$.

\item $\rho>0$ and $A_{(1)}$ is nonsingular: Let $R$ denote the block matrix
in (\ref{C=D=0_1}), let%
\[
S=\left[
\begin{array}
[c]{cc}%
I_{\rho} & -\left(  A_{\left(  1\right)  }\right)  ^{-1}B\\
0 & I_{m_{2}}%
\end{array}
\right]  \oplus I_{m_{1}}\text{,}%
\]
and compute%
\[
S^{\ast}RS=\left[
\begin{array}
[c]{ccc}%
A_{\left(  1\right)  } & 0 & 0\\
X & 0_{m_{2}} & [I_{m_{2}}\;0]\\
0 & 0 & 0_{m_{1}}%
\end{array}
\right]  \text{,}%
\]
in which $X=-B^{\ast}A_{(1)}^{-\ast}A_{(1)}$. Lemma \ref{lemma C=D=0} tells us
that $S^{\ast}RS$ is *congruent to (\ref{C=D=0_1}) with $B=0$, that is, to
$A_{\left(  1\right)  }\oplus M$ with
\[
M=\left[
\begin{array}
[c]{cc}%
0_{m_{2}} & [I_{m_{2}}\;0]\\
0 & 0_{m_{1}}%
\end{array}
\right]  \text{.}%
\]
Since $\operatorname{rank}M=m_{2}$ and $M^{2}=0$, the regularizing
decomposition of $A$ is $A_{\left(  1\right)  }\oplus J_{1}^{[m_{1}-m_{2}%
]}\oplus J_{2}^{[m_{2}]} $.

\item $\rho>0$ and $A_{(1)}$ is singular but nonzero:  We address this case in
the next lemma.
\end{itemize}

\begin{lemma}
\label{lemma singular A(1)}Let $\rho>0$ and let $A_{(1)}$ be the $\rho
$-by-$\rho$ upper left block in a *congruence reduced form (\ref{C=D=0_1}) of
$A$. Let $m_{3}$ and $m_{4}$ denote the nullity and *non-normal nullity,
respectively, of $A_{\left(  1\right)  }$, and suppose that $m_{3}>0$. Then
$A$ is *congruent to%
\begin{equation}
\left[
\begin{array}
[c]{ccccc}%
A_{\left(  2\right)  } & B^{\prime} & 0 & 0 & 0\\
0 & 0_{m_{4}} & [I_{m_{4}}\;0] & 0 & 0\\
0 & 0 & 0_{m_{3}} & [I_{m_{3}}\;0] & 0\\
0 & 0 & 0 & 0_{m_{2}} & [I_{m_{2}}\;0]\\
0 & 0 & 0 & 0 & 0_{m_{1}}%
\end{array}
\right]  \text{,}\label{singular A(1)_1}%
\end{equation}
in which $[A_{\left(  2\right)  }\;B^{\prime}]$ has linearly independent rows.
The parameters $m_{1}$, $m_{2}$, $m_{3}$, and $m_{4}$, and the *congruence
class of $A_{\left(  2\right)  }$ are *congruence invariants of $A$.
\end{lemma}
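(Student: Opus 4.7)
The plan is to construct the asserted *congruence by two stages of reduction followed by a clean-up, and to establish the invariance by a second application of Lemma \ref{lemma reduced form}.

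Since $A_{(1)}$ is singular with nullity $m_3>0$, applying Steps 1 and 2 and Lemma \ref{lemma C=D=0} to $A_{(1)}$ itself produces a nonsingular $T$ with
\[
T A_{(1)} T^{\ast}=
\begin{bmatrix}
A_{(2)} & B' & 0\\
0 & 0_{m_4} & [I_{m_4}\;0]\\
0 & 0 & 0_{m_3}
\end{bmatrix},
\]
in which $[A_{(2)}\;B']$ has linearly independent rows. Extending $T$ to $\widetilde{T}=T\oplus I_{m_2+m_1}$ and applying the *congruence $\widetilde{T}(\,\cdot\,)\widetilde{T}^{\ast}$ to the *congruence reduced form of $A$ yields a 5-block matrix agreeing with (\ref{singular A(1)_1}) except that column block 4 contains three ``contamination'' blocks $Y_1, Y_2, Y_3$ in block rows 1, 2, 3 respectively, arising from $TB$.

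The main technical step is to remove these contaminations. The key observation is that $Y_3$ has full row rank $m_3$: the bottom $m_3$ rows of $T$ form a basis of the left null space of $A_{(1)}$, and any combination $v$ of them with $vB=0$ would satisfy $v[A_{(1)}\;B]=0$, contradicting linear independence of the rows of $[A_{(1)}\;B]$. Hence one can pick a nonsingular $m_2$-by-$m_2$ matrix $V$ with $Y_3 V=[I_{m_3}\;0]$ and a compatible block-triangular change of basis on the last $m_1$ coordinates that preserves the $[I_{m_2}\;0]$ block at position $(4,5)$, reducing $Y_3$ to $[I_{m_3}\;0]$. The remaining blocks $Y_1, Y_2$ are killed by *congruences $I+L$ with $L$ supported in carefully chosen off-diagonal positions, exploiting the observation noted just before Lemma \ref{lemma C=D=0} that column operations along a block column whose matching block row is zero are already *congruences on their own. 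For $Y_2$, taking $L$ supported at $(3,4)$ and $(5,4)$ suffices: the $(3,4)$-term subtracts an appropriate multiple of block column 3 from block column 4 to kill $Y_2$, while the $(5,4)$-term cancels the companion effect on block $(4,4)$ coming from the symmetric row operation. For $Y_1$, $L$ supported at $(1,4), (2,4), (5,1), (5,2), (5,3)$ suffices: full row rank of $[A_{(2)}\;B']$ lets one solve $A_{(2)}L_{1,4}+B'L_{2,4}=-Y_1$ to kill $Y_1$, the $(5,\cdot)$-terms cancel the side effects on $(4,1), (4,2), (4,3)$, and the quadratic term in $L$ exactly cancels the linear row-operation contribution to $(4,4)$.

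For the invariance claim, Lemma \ref{lemma reduced form} applied to $\mathcal{A}$ already identifies $m_1$, $m_2$, and the *congruence class of $A_{(1)}$ as invariants of $A$; applying it to the just-constructed *congruence reduced form of $A_{(1)}$ then shows that $m_3$, $m_4$, and the *congruence class of $A_{(2)}$ are invariants of $A_{(1)}$, and hence of $A$. The main obstacle is the $Y_1, Y_2$ clean-up: the symmetric nature of *congruence means each column operation carries a row operation that may introduce unwanted entries elsewhere, so several $L$-contributions must be combined and their mutual cancellation verified, which is feasible because block row 5 and much of block column 1 (through the zero pattern above $(4,5)$) are entirely zero.
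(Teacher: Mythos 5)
Your proposal is correct and follows essentially the same route as the paper's proof: lift a *congruence reduced form of $A_{(1)}$ to the full matrix, kill the contamination blocks in the fourth block column by column operations whose companion row operations are then neutralized via the (harmless, because its row is zero) fifth block column, and normalize the remaining full-row-rank block at position $(3,4)$ to $[I_{m_3}\;0]$, with the invariance following from a second application of Lemma \ref{lemma reduced form}. The only differences are cosmetic — you treat $Y_1$ and $Y_2$ by separate $I+L$ *congruences and verify the quadratic cancellation explicitly, whereas the paper kills them together using the column span of the upper-left $2\times 3$ block, and you supply the (correct) justification for the full row rank of $Y_3$ that the paper merely asserts for $B_3$.
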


\begin{proof}
Step 1: Lemma \ref{lemma C=D=0} ensures that there is a nonsingular $S$ such
that
\[
S^{\ast}A_{(1)}S=\left[
\begin{array}
[c]{ccc}%
A_{(2)} & B^{\prime} & 0\\
0 & 0_{m_{4}} & [I_{m_{4}}\;0]\\
0 & 0 & 0_{m_{3}}%
\end{array}
\right]
\]
is a *congruence reduced form of $A_{(1)}$. Let $\rho^{\prime}$ denote the
size of $A_{(2)}$. Let $\hat{S}=S\oplus I_{m_{2}+m_{1}}$ and observe that
$\hat{S}^{\ast}A\hat{S}$ has the block form
\begin{equation}
\left[
\begin{array}
[c]{ccc}%
S^{\ast}A_{(1)}S & S^{\ast}B & 0\\
0 & 0_{m_{2}} & [I_{m_{2}}\;0]\\
0 & 0 & 0_{m_{1}}%
\end{array}
\right]  =\left[
\begin{array}
[c]{ccccc}%
A_{\left(  2\right)  } & B^{\prime} & 0 & B_{1} & 0\\
0 & 0_{m_{4}} & [I_{m_{4}}\;0] & B_{2} & 0\\
0 & 0 & 0_{m_{3}} & B_{3} & 0\\
0 & 0 & 0 & 0_{m_{2}} & [I_{m_{2}}\;0]\\
0 & 0 & 0 & 0 & 0_{m_{1}}%
\end{array}
\right] \label{singular A(1)_2}%
\end{equation}
in which%
\[
S^{\ast}B=\left[
\begin{array}
[c]{c}%
B_{1}\\
B_{2}\\
B_{3}%
\end{array}
\right]  \text{.}%
\]

Step 2: Let $M$ denote the upper left 2-by-3 block of the 5-by-5 block matrix
in (\ref{singular A(1)_2}). The rows of $M$ are linearly independent, so its
columns span $\mathbb{F}^{\rho^{\prime}+m_{4}}$. Add a linear combination of
the \emph{columns} of $M$ to the fourth block column of (\ref{singular A(1)_2}%
) in order to put zeros in the blocks $B_{1}$ and $B_{2}$. Complete this
column operation to a *congruence by adding the conjugate linear combination
of \emph{rows} of $M$ to the fourth block row of (\ref{singular A(1)_2}); this
spoils the zeros in the first four blocks of the fourth block row. Add linear
combinations of the fifth block column to the first four block columns in
order to re-establish the zero blocks there; the fifth block row is zero so
completing this column operation to a *congruence with a conjugate row
operation has no effect.

We have now achieved a *congruence of $A$ that has the form%
\begin{equation}
R=\left[
\begin{array}
[c]{ccccc}%
A_{\left(  2\right)  } & B^{\prime} & 0 & 0 & 0\\
0 & 0_{m_{4}} & [I_{m_{4}}\;0] & 0 & 0\\
0 & 0 & 0_{m_{3}} & B_{3} & 0\\
0 & 0 & 0 & 0_{m_{2}} & [I_{m_{2}}\;0]\\
0 & 0 & 0 & 0 & 0_{m_{1}}%
\end{array}
\right]  \text{,}\label{singular A(1)_2.3}%
\end{equation}
in which $B_{3}$ has linearly independent rows.

Step 3: Whenever one has a block matrix like that in (\ref{singular A(1)_2.3}%
), in which some of the superdiagonal blocks below the first block row do not
have the standard form $[I\;0]$ but nevertheless have linearly independent
rows, there is a finite sequence of *congruences that restores it to a
standard form like that in (\ref{singular A(1)_1}). For example, $B_{3}$ in
(\ref{singular A(1)_2.3}) has linearly independent rows, so there is a
nonsingular $V$ such that $B_{3}V=[I_{m_{3}}\;0]$. Right-multiply the 4th
block column of $R$ by $V$ and left-multiply the 4th block row of the result
by $V^{\ast}$. This restores the standard form of the block in position $3,4$
but spoils the $[I\;0]$ block in position $4,5$, though it still has linearly
independent rows. Now right-multiply the fifth block column by a factor that
restores it to standard form (in this case, the right multiplier is $V^{-\ast
}\oplus I_{m_{1}-m_{2}}$) and then left-multiply the fifth block row by the *
of that factor. If there are more than five block rows, continue this process
down the block superdiagonal to the block in the last block column, at which
point all of the superdiagonal blocks below the first block row are restored
to standard form since the last block row is zero. Of course, this finite
sequence of transformations is a *congruence of $R$.
\end{proof}

\medskip The preceding lemma clarifies the nature of the block $B$ in a
*congruence reduced form (\ref{C=D=0_1}) of $A$: except for the requirement
that $[A_{(1)}\;B]$ have full row rank, $B$ is otherwise arbitrary.

If there are different involutions on $\mathbb{F}$, the same matrix may have a
different regularizing decomposition for each involution. For example, take
$\mathbb{F}=\mathbb{C}$ and consider%
\[
A=\left[
\begin{array}
[c]{cc}%
1 & -i\\
i & 1
\end{array}
\right]  \text{.}%
\]
If the involution is complex conjugation, then $N(A)=N(A^{\ast})$ since $A$ is
Hermitian, $\zeta=m_{1}=1$, $\kappa=m_{2}=0$, and $\rho=1$; the regularizing
decomposition of $A$ is $[1]\oplus J_{1}$. However, if the involution is the
identity, then $N(A)\cap N(A^{T})=\{0\}$, $\zeta=m_{1}=0$, $\kappa=m_{2}=1$,
and $\rho=0$; the regularizing decomposition of $A$ is $J_{2}$.

\section{The Regularizing Decomposition\label{Section Decomposition}}

If the block $A_{\left(  2\right)  }$ in (\ref{singular A(1)_1}) is singular,
repeat the first two steps in the proof of Lemma \ref{lemma singular A(1)} to
reduce $A$ further by *congruence and produce the nullity $m_{5}$ and
*non-normal nullity $m_{6}$ of $A_{\left(  2\right)  }$ and a square matrix
$A_{(3)}$. Then perform the process described in Step 3 to restore the
standard form of the superdiagonal blocks below the first block row.

Reduction of $A$ to a sparse form that reveals all of its singular structure
under *congruence can be achieved by repeating the three steps in Lemma
\ref{lemma singular A(1)} to obtain successively smaller blocks $A_{(3)}$,
$A_{(4)}$, ..., $A_{(\tau)}$ (with successively smaller nullities) in which
$A_{(\tau)}$ is the first block that is nonsingular. The payoff for our effort
in deriving a form more sparse than that produced by the *congruences implicit
in the regularization algorithm alone, e.g., (\ref{Stage 2}), is that it
permits us to verify the validity of the regularizing decomposition asserted
in Theorem \ref{theorem regularization algorithm}.

\begin{theorem}
[Regularizing Decomposition]\label{theorem regularizing decomposition} Let $A$
be a given square singular matrix over $\mathbb{F}$. Perform the
regularization algorithm on $A $ and obtain the integers $\tau,m_{1}%
,m_{2},...,m_{2\tau}$ and a nonsingular matrix $A_{(\tau)}$. Then $\tau
,m_{1},m_{2},...,m_{2\tau}$ and the *congruence class of $A_{(\tau)}$ are
*congruence invariants of $A$. Moreover,%

\noindent
(a) (Canonical sparse form) $A$ is *congruent to $A_{(\tau)}\oplus N$, in
which
\begin{equation}
N=\left[
\begin{array}
[c]{cccccc}%
0_{m_{2\tau}} & [I_{m_{2\tau}}\;0] &  &  &  & \\
& 0_{m_{2\tau-1}} & [I_{m_{2\tau-1}}\;0] &  &  & \\
&  & \ddots & \ddots &  & \\
&  &  & 0_{m_{3}} & [I_{m_{3}}\;0] & \\
&  &  &  & 0_{m_{2}} & [I_{m_{2}}\;0]\\
&  &  &  &  & 0_{m_{1}}%
\end{array}
\right] \label{regularizing decomposition_1}%
\end{equation}
has all of its nonzero blocks $[I_{m_{2\tau}}\;0],\ldots,[I_{m_{2}}\;0]$ in
the first block superdiagonal, and each block $[I_{m_{k}}\;0]$ is $m_{k}%
$-by-$m_{k-1}$, $k=2,3,\ldots,2\tau$.%

\noindent
(b) (Existence)$\ A$ is *congruent to $A_{(\tau)}\oplus M$, in which%
\begin{equation}
M=J_{1}^{[m_{1}-m_{2}]}\oplus J_{2}^{[m_{2}-m_{3}]}\oplus J_{3}^{[m_{3}%
-m_{4}]}\oplus\cdots\oplus J_{2\tau-1}^{[m_{2\tau-1}-m_{2\tau}]}\oplus
J_{2\tau}^{[m_{2\tau}]}\text{.}\label{regularizing decomposition_2}%
\end{equation}
%

\noindent
(c) (Uniqueness) Suppose $A$ is *congruent to $B\oplus C$, in which $B$ is
nonsingular and $C$ is a direct sum of nilpotent Jordan blocks. Then $B$ is
*congruent to $A_{(\tau)}$ and some permutation of the direct summands of $C$
gives $M$.%

\noindent
(d) (Unitarily reduced form) Consider the three cases ($\alpha)$
$\mathbb{F=C}$ and the involution is complex conjugation, or ($\beta)$
$\mathbb{F=C}$ and the involution is the identity, or ($\gamma)$
$\mathbb{F=R}$ and the involution is the identity. Then depending on the case
there is ($\alpha)$ a complex unitary $U$, or ($\beta)$ a complex unitary $V$,
or ($\gamma)$ a real orthogonal $Q$ such that ($\alpha)$ $U^{\ast}AU$ or
($\beta)$ $V^{T}AV$ or ($\gamma)$ $Q^{T}AQ$ has the form
\begin{equation}
\left[
\begin{array}
[c]{ccccccccc|cc}%
B_{2\tau+1} & \multicolumn{1}{|c}{0} & 0 & \multicolumn{2}{|c|}{} &  &
\multicolumn{1}{c|}{} &  &  &  & \\\cline{1-3}%
\ast & \multicolumn{1}{|c}{\ast} & B_{2\tau} & \multicolumn{2}{|c|}{} &  &
\multicolumn{1}{c|}{} &  &  &  & \\
0 & \multicolumn{1}{|c}{0} & 0 & \multicolumn{2}{|c|}{} & \ast &
\multicolumn{1}{c|}{0} &  &  &  & \\\cline{1-3}
&  &  & \ddots &  & \multicolumn{2}{|c|}{} & \ast & 0 &  & \\[-6pt]
&  &  &  & \ddots & \multicolumn{1}{|c}{B_{7}} & 0 & \multicolumn{1}{|c}{} &
& \ast & 0\\\cline{1-7}%
\multicolumn{5}{c|}{\ast} & \ast & B_{6} & \multicolumn{1}{|c}{} &  &  & \\
\multicolumn{5}{c|}{0} & 0 & 0 & \multicolumn{1}{|c}{B_{5}} & 0 &  &
\\\cline{1-9}%
\multicolumn{7}{c|}{\ast} & \ast & B_{4} &  & \\
\multicolumn{7}{c|}{0} & 0 & 0 & B_{3} & 0\\\hline
\multicolumn{9}{c|}{\ast} & \ast & B_{2}\\
\multicolumn{9}{c|}{0} & 0 & 0
\end{array}
\right]  \!\!\!%
\begin{array}
[c]{l}%
\\
\}m_{2\tau}\\
\}m_{2\tau-1}\\
\\
\vdots\\
\vdots\\
\\
\}m_{4}\\
\}m_{3}\\
\}m_{2}\\
\}m_{1}%
\end{array}
\label{unitarycanonical}%
\end{equation}
in which all $2\tau+1$ diagonal blocks $B_{2\tau+1},\ast,0,\dots,\ast
,0,\ast,0$ are square, $B_{2\tau+1}$ is nonsingular, and each of $B_{2}%
,\dots,B_{2\tau}$ has linearly independent rows. The integers $\tau
,m_{1},\dots,m_{2\tau}$ are the same as those in
(\ref{regularizing decomposition_1}) and (\ref{regularizing decomposition_2}).
The equivalence class (under complex *congruence, complex $T$-congruence, or
real $T$-congruence, respectively) of $B_{2\tau+1}$ is the same as that of
$A_{(\tau)}$. In the principal submatrix of (\ref{unitarycanonical}) obtained
by deleting the block row and column containing $B_{2\tau+1}$, replacing all
blocks denoted by stars with zero blocks and replacing each $B_{i}$ with
$[I\ 0]$ produces the matrix $N$ in (\ref{regularizing decomposition_1}).
\end{theorem}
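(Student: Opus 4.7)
The plan is to derive the four assertions from the structural results of Section 3 together with the invariance of the algorithmic outputs that was already established via Lemma \ref{lemma reduced form} and the induction sketched immediately after it.

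\textbf{Parts (a) and (b).} Here I would iterate Lemma \ref{lemma singular A(1)}. Starting from the *congruence reduced form (\ref{C=D=0_1}) of $A$, apply Lemma \ref{lemma singular A(1)} to the upper-left block whenever it is still singular; each application strips off one more step of the block superdiagonal staircase and leaves a strictly smaller upper-left block in its place. After $\tau$ such steps one arrives at the nonsingular $A_{(\tau)}$, and the block structure accumulated along the way is precisely the matrix $N$ of (\ref{regularizing decomposition_1}); this gives (a). For (b), I would show that $N$ is *congruent to $M$ by a permutation similarity. Index the canonical basis by pairs $(i,c)$, where $c$ denotes the block-column and $i$ the index within it; the action of $N$ is $N e_i^{(c)}=e_i^{(c-1)}$ when $i$ does not exceed the width of the target block, and $Ne_i^{(c)}=0$ otherwise. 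Collecting the resulting chains, a short count shows that the number of Jordan blocks of size $L$ is $m_L-m_{L+1}$ for $1\le L<2\tau$ and $m_{2\tau}$ for $L=2\tau$, which matches $M$. Reordering the basis into these chains is a permutation similarity, and since a permutation matrix $P$ satisfies $P^{\ast}=P^{-1}$, it is simultaneously a *congruence.

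\textbf{Part (c).} The invariance of $\tau$, of each $m_i$, and of the *congruence class of $A_{(\tau)}$ was already obtained from Lemma \ref{lemma reduced form} and the stage-by-stage induction that followed it. Given another decomposition $A\sim B\oplus C$ with $B$ nonsingular and $C=J_1^{[a_1]}\oplus\cdots\oplus J_r^{[a_r]}$, running the regularization algorithm on $B\oplus C$ never disturbs the nonsingular summand, so the terminal block must be $B$; by invariance, $B$ is *congruent to $A_{(\tau)}$. Applying the chain count of (b) directly to $C$ yields $m_L=a_L+a_{L+1}+\cdots+a_r$, hence $a_L=m_L-m_{L+1}$; these are precisely the multiplicities that appear in $M$, so some permutation of the summands of $C$ gives $M$.

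\textbf{Part (d).} Theorem \ref{Theorem Unitary transformations} produces the first stage of the form (\ref{Stage 2}) using only the prescribed unitary or real orthogonal transformations. Iterating that theorem on the successively smaller upper-left blocks yields (\ref{unitarycanonical}), with the starred blocks absorbing the off-diagonal material generated at each stage. The parameters $\tau,m_1,\ldots,m_{2\tau}$ and the equivalence class of $B_{2\tau+1}$ must agree with those of (\ref{regularizing decomposition_1}) and (\ref{regularizing decomposition_2}) by the invariance just invoked, and the prescribed deletion/replacement recipe visibly collapses (\ref{unitarycanonical}) to $N$.

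The main obstacle I anticipate is the chain count in (b): one must keep careful track of how the column widths $m_{2\tau+1-c}$ truncate the identity blocks $[I\ 0]$ at each step down the superdiagonal, and verify that the chain lengths telescope to exactly $m_L-m_{L+1}$, with the boundary convention $m_{2\tau+1}=0$. Once that count is in hand, the remaining three parts are bookkeeping on top of the lemmas of Section 3 and Theorem \ref{Theorem Unitary transformations}.
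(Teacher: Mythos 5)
Your proposal is correct and follows essentially the same route as the paper: iterate Lemma \ref{lemma singular A(1)} to obtain the sparse form $N$, pass from $N$ to $M$ by a permutation similarity obtained from the decomposition of the basis into linear chains (the paper phrases this via the directed graph of $N$, and additionally computes $\operatorname{rank}N^{k}$ with second differences, but your direct chain count yields the same multiplicities $m_{L}-m_{L+1}$), deduce uniqueness from the already-established invariance of $\tau$, the $m_{i}$, and the class of $A_{(\tau)}$, and obtain (d) by iterating Theorem \ref{Theorem Unitary transformations}.
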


\begin{proof}
The *congruence invariance of the parameters $m_{i}$ and $\tau$, as well as
the *congruence class of $A_{(\tau)}$ have already been established. The form
of $N$ is the outcome of repeating the reduction described in Lemma
\ref{lemma singular A(1)} until it terminates with a block $A_{(\tau)}$ that
is nonsingular. The only issue is the explicit description of the Jordan block
structure in (\ref{regularizing decomposition_2}).

Notice that%
\[
\underset{m_{k-1}}{\underbrace{[I_{m_{k}}\;0]}}\underset{m_{k-2}}%
{\underbrace{\,[I_{m_{k-2}}\;0]}}=\underset{m_{k-2}}{\underbrace{[I_{m_{k}%
}\;0]}}%
\]
and hence
\[
N^{2}=\left[
\begin{array}
[c]{cccccc}%
0_{m_{2\tau}} & 0 & [I_{m_{2\tau}}\;0] &  &  & \\
& 0_{m_{2\tau-1}} & 0 & [I_{m_{2\tau-1}}\;0] &  & \\
&  & \ddots & \ddots & \ddots & \\
&  &  & 0_{m_{3}} & 0 & [I_{m_{3}}\;0]\\
&  &  &  & 0_{m_{2}} & 0\\
&  &  &  &  & 0_{m_{1}}%
\end{array}
\right]
\]
has its nonzero blocks $[I_{m_{2\tau}}\;0],\ldots,[I_{m_{3}}\;0]$ in the
\emph{second} block superdiagonal. In general, $N^{k}$ is a $0$-$1$ matrix
that has its nonzero blocks $[I_{m_{2\tau}}\;0],\ldots,[I_{m_{k+1}}\;0]$ in
the $k\emph{th}$ block superdiagonal. The structure of the powers $N^{k}$
ensures that the rank of each is equal to the number of its nonzero entries,
so%
\begin{equation}
\operatorname{rank}N^{k}=m_{k+1}+\cdots+m_{2\tau}\text{,\quad\ }%
\kappa=1,...,2\tau-1\label{proof regularizing decomposition_1}%
\end{equation}
and $N^{2\tau}=0$. The list of multiplicities of the nilpotent Jordan blocks
in the Jordan Canonical Form of $N$ (arranged in order of increasing size) is
given by the sequence of second differences of the sequence $\left\{
\operatorname{rank}N^{k}\right\}  _{k=1}^{2\tau}$ \cite[Exercise, p. 127]%
{hor}, which is $m_{1}-m_{2}$, $m_{2}-m_{3}$, $m_{3}-m_{4}$, etc. The direct
sum of nilpotent Jordan blocks in (\ref{regularizing decomposition_2}) is
therefore the Jordan Canonical Form of $N$.

The final step in proving (\ref{regularizing decomposition_2}) is to show that
the Jordan Canonical Form of $N$ can be achieved via a permutation similarity,
which is a *congruence. A conceptual way to do this is to show that the
directed graphs of the two matrices $M$ and $N$ are isomorphic.

The directed graph of $J_{k}$ is a linear chain with $k$ nodes $P_{1}%
,\ldots,P_{k}$ in which there is an arc from $P_{i}$ to $P_{i+1}$ for each
$i=1,\ldots,k-1$, so the directed graph of $M$ is a disjoint union of such
linear chains. There are $m_{k}-m_{k+1}$ chains with $k$ nodes for each
$k=1,\ldots,2\tau$.

To understand the directed graph of $N$ one can begin with any node
corresponding to any row in the first block row. Each of these $m_{2\tau}$
nodes is the first in a linear chain with $2\tau$ nodes. In the second block
row, the nodes corresponding to the first $m_{2\tau}$ rows are members of the
linear chains associated with the first block row, but the nodes corresponding
to the last $m_{2\tau-1}-m_{2\tau}$ rows begin their own linear chains, each
with $2\tau-1$ nodes. Proceeding in this way downward through the block rows
of $N$ we identify a set of disjoint linear chains that is identical to the
set of disjoint linear chains associated with $M$. A permutation of labels of
nodes that identifies the directed graphs of $M$ and $N$ gives a permutation
matrix that achieves the desired permutation similarity between $M$ and $N$.

The uniqueness assertion follows from (a) our identification of all the
relevant parameters as *congruence invariants of $A$ and (b) uniqueness of the
Jordan Canonical Form.

Finally, the assertions about the unitarily reduced form
(\ref{unitarycanonical}) follow from the regularizing algorithm in Section
\ref{Section Algorithm} and the proof of Theorem
\ref{Theorem Unitary transformations}. When the regularizing algorithm is
carried out with unitary transformations, the result is a matrix of the form
(\ref{unitarycanonical}), of which (\ref{Stage 2}) is a special case.
\end{proof}

\medskip

\section{Regularization of a *Selfadjoint Pencil\label{Section Pencil}}

Theorem \ref{theorem regularizing decomposition} implies that every
*selfadjoint matrix pencil $A+\lambda A^{\ast}$ has a regularizing
decomposition \eqref{eq2} with a *selfadjoint regular part. The algorithm in
Section \ref{Section Algorithm} can be used to construct the regularizing
decomposition, and if $\mathbb{F}=\mathbb{C}$ with either the identity or
complex conjugation as the involution (respectively, $\mathbb{F}=\mathbb{R}$
with the identity involution), the construction can be carried out using only
unitary (respectively, real orthogonal) transformations. We emphasize that the
involution on $\mathbb{F}$ may be the identity, so the assertions in the
following theorem are valid for matrix pencils of the form $A+\lambda A^{T}$.

\begin{theorem}
\label{penc} Let $A+\lambda A^{\ast}$ be a *\negthinspace selfadjoint matrix
pencil over $\mathbb{F}$ and let $A$ be *congruent to $A_{(\tau)}\oplus M$, in
which $A_{(\tau)}$ is nonsingular and $M$ is the direct sum of nilpotent
Jordan blocks in (\ref{regularizing decomposition_2}). Then there is a
nonsingular $S$ such that $S(A+\lambda A^{\ast})S^{\ast}=(A_{(\tau)}+\lambda
A_{(\tau)}^{\ast})\oplus K$ and
\[
K=(J_{1}+\lambda J_{1}^{T})^{[m_{1}-m_{2}]}\oplus(J_{2}+\lambda J_{2}%
^{T})^{[m_{2}-m_{3}]}\oplus\cdots\oplus(J_{2\tau}+\lambda J_{2\tau}%
^{T})^{[m_{2\tau}]}\text{.}%
\]
Moreover, each singular block $J_{k}+\lambda J_{k}^{T}$ may be replaced by
\begin{equation}%
\begin{cases}
(F_{\ell}+\lambda G_{\ell})\oplus(G_{\ell}^{T}+\lambda F_{\ell}^{T}) &
\text{if }k=\text{$2\ell-1$ is odd}\\
\big(J_{\ell}+\lambda I_{\ell}\big)\oplus\big(I_{\ell}+\lambda J_{\ell}\big) &
\text{if $k=2$}\ell\text{ is even.}%
\end{cases}
\label{pen1}%
\end{equation}

\end{theorem}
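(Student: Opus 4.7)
I apply Theorem \ref{theorem regularizing decomposition} to $A$ to obtain a nonsingular $T$ with $T^{\ast}AT = A_{(\tau)}\oplus M$, where $M$ is the nilpotent direct sum in \eqref{regularizing decomposition_2}. Setting $S = T^{\ast}$ (so $S^{\ast\ast}=S$), we have $SAS^{\ast}=A_{(\tau)}\oplus M$, and taking $\ast$ of both sides yields $SA^{\ast}S^{\ast}=A_{(\tau)}^{\ast}\oplus M^{\ast}$. Summing after multiplying the second identity by $\lambda$ gives
\[
S(A+\lambda A^{\ast})S^{\ast}=(A_{(\tau)}+\lambda A_{(\tau)}^{\ast})\oplus(M+\lambda M^{\ast}).
\]
Because every entry of each $J_{k}$ lies in $\{0,1\}\subset\mathbb{F}$ and is fixed by the involution, $J_{k}^{\ast}=J_{k}^{T}$; distributing over the direct sum shows that $M+\lambda M^{\ast}=K$ in the form stated, which proves the main assertion.

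For the ``moreover'' clause I must show that each $J_{k}+\lambda J_{k}^{T}$ is strictly equivalent to the pencil on the right of \eqref{pen1}. My plan is to use the parity permutation that groups odd indices together: let $\pi$ reorder rows as $1,3,\dots$ followed by $2,4,\dots$, and let $\sigma$ reorder columns as $2,4,\dots$ followed by $1,3,\dots$. Since the only nonzero entries of $J_{k}+\lambda J_{k}^{T}$ lie at positions $(i,j)$ with $|i-j|=1$, every such entry has $i$ and $j$ of opposite parity; consequently in the permuted matrix the (odd-row, odd-col) and (even-row, even-col) off-diagonal blocks vanish, and what remains is block-diagonal.

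A direct bookkeeping computation then identifies the two diagonal blocks. For odd $k=2\ell-1$ they have sizes $\ell\times(\ell-1)$ and $(\ell-1)\times\ell$, and they come out to be $F_{\ell}^{T}+\lambda G_{\ell}^{T}$ and $G_{\ell}+\lambda F_{\ell}$, respectively. Applying reversal matrices of the appropriate sizes on the left and right of each block exploits the identities $PF_{\ell}^{T}Q=G_{\ell}^{T}$ and $P'F_{\ell}Q'=G_{\ell}$ (and the analogous swaps for $G$) to convert these into $G_{\ell}^{T}+\lambda F_{\ell}^{T}$ and $F_{\ell}+\lambda G_{\ell}$; a block permutation then reorders the two summands into the target $(F_{\ell}+\lambda G_{\ell})\oplus(G_{\ell}^{T}+\lambda F_{\ell}^{T})$. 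For even $k=2\ell$ the two $\ell\times\ell$ diagonal blocks turn out to be $I_{\ell}+\lambda J_{\ell}^{T}$ and $J_{\ell}+\lambda I_{\ell}$; the reversal similarity $PJ_{\ell}^{T}P=J_{\ell}$ (with $P=P^{-1}$ the reversal matrix) converts the first into $I_{\ell}+\lambda J_{\ell}$, and a block swap yields $(J_{\ell}+\lambda I_{\ell})\oplus(I_{\ell}+\lambda J_{\ell})$.

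The only step I expect to require care is the combinatorial bookkeeping that pins down exactly which $F_{\ell}$, $G_{\ell}$, $J_{\ell}$ pattern each parity block realizes (including verifying the matching of block sizes in the odd case, where the two blocks are rectangular of complementary shapes); once this identification is in hand, the reversal steps and the reduction to Theorem \ref{theorem regularizing decomposition} are routine.
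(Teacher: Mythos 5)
Your proof is correct, and both halves track the paper's argument closely. The main assertion is handled exactly as the paper intends: it is an immediate consequence of Theorem \ref{theorem regularizing decomposition}(b) together with $M^{\ast}=M^{T}$ for the $0$--$1$ matrix $M$. For the ``moreover'' clause the paper's Lemma \ref{lem13} and your argument rest on the same combinatorial fact --- every nonzero entry of $J_{k}+\lambda J_{k}^{T}$ sits at a position $(i,j)$ with $i$ and $j$ of opposite parity --- but you execute it slightly differently. The paper applies one \emph{simultaneous} permutation of rows and columns (a congruence), which carries $J_{k}$ to the anti-block-diagonal matrix $M_{\ell}$ or $N_{\ell}$; the permuted pencil is then still selfadjoint, and a single block-row swap (a strict equivalence) yields \eqref{pen1}. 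You instead apply \emph{independent} parity-based row and column permutations, which block-diagonalizes $J_{k}+\lambda J_{k}^{T}$ at once but lands on $(F_{\ell}^{T}+\lambda G_{\ell}^{T})\oplus(G_{\ell}+\lambda F_{\ell})$ (resp.\ $(I_{\ell}+\lambda J_{\ell}^{T})\oplus(J_{\ell}+\lambda I_{\ell})$), so you need the extra reversal-matrix equivalences to reach the stated form. Your block-size and pattern bookkeeping checks out in both parity cases. Since the claim is only strict equivalence --- which is all the paper's own lemma asserts --- abandoning the congruence structure at the permutation step costs you nothing; the paper's version merely has the cosmetic advantage of keeping the intermediate pencil selfadjoint.
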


Use of the blocks (\ref{pen1}) instead of the corresponding Jordan blocks is
justified by the following lemma.

\begin{lemma}
\label{lem13} $J_{k}+\lambda J_{k}^{T}$ is strictly equivalent to (\ref{pen1}).
\end{lemma}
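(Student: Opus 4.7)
The plan is to apply an explicit basis permutation to $J_k + \lambda J_k^T$ that brings it into a block anti-diagonal form, then rearrange and apply reversal-matrix transformations to reach the forms in (\ref{pen1}).

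Let $P$ be the permutation matrix that reorders the standard basis $e_1, \ldots, e_k$ by listing the odd-indexed vectors first, $e_1, e_3, \ldots$, followed by the even-indexed ones $e_2, e_4, \ldots$. Because $J_k e_j = e_{j-1}$ (with $e_0 = 0$) interchanges odd- and even-indexed basis vectors, $P J_k P^T$ will have zero diagonal blocks in the splitting $\lceil k/2\rceil + \lfloor k/2\rfloor$. A direct entrywise check will show that
\[
P J_{2\ell} P^T = \begin{bmatrix} 0 & I_\ell \\ J_\ell & 0 \end{bmatrix}
\qquad\text{and}\qquad
P J_{2\ell-1} P^T = \begin{bmatrix} 0 & F_\ell^T \\ G_\ell & 0 \end{bmatrix}.
\]
Adding $\lambda$ times the transpose then gives
\[
P(J_k + \lambda J_k^T) P^T = \begin{bmatrix} 0 & X + \lambda Y^T \\ Y + \lambda X^T & 0 \end{bmatrix},
\]
where $(X,Y) = (I_\ell, J_\ell)$ for $k = 2\ell$ and $(X,Y) = (F_\ell^T, G_\ell)$ for $k = 2\ell-1$.

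Next, I would swap the two block rows via a nonsingular row permutation. This produces the block-diagonal pencil $(Y + \lambda X^T) \oplus (X + \lambda Y^T)$, which reads $(J_\ell + \lambda I_\ell) \oplus (I_\ell + \lambda J_\ell^T)$ for even $k$ and $(G_\ell + \lambda F_\ell) \oplus (F_\ell^T + \lambda G_\ell^T)$ for odd $k$.

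Finally, I would apply the reversal matrices $E_n$ (of appropriate sizes) on either side of each summand. Using the identities $E_\ell J_\ell E_\ell = J_\ell^T$, $E_{\ell-1} F_\ell E_\ell = G_\ell$, and $E_{\ell-1} G_\ell E_\ell = F_\ell$, these strict-equivalence transformations convert $I_\ell + \lambda J_\ell^T$ into $I_\ell + \lambda J_\ell$, $G_\ell + \lambda F_\ell$ into $F_\ell + \lambda G_\ell$, and dually $F_\ell^T + \lambda G_\ell^T$ into $G_\ell^T + \lambda F_\ell^T$, yielding the form (\ref{pen1}). The only real challenge is careful bookkeeping of the non-square block sizes --- $F_\ell$ and $G_\ell$ are $(\ell-1) \times \ell$ --- together with tracking the distinction between $J_\ell$ and $J_\ell^T$; no deeper difficulty is expected.
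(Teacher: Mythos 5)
Your proposal is correct and follows essentially the same route as the paper: both proofs apply a permutation similarity to $J_k$ (yours is the explicit ``odd indices first, then even indices'' interleaving; the paper specifies its permutation by listing the images of the unit-entry positions) to reach a block anti-diagonal matrix $\left[\begin{smallmatrix}0 & \ast\\ \ast & 0\end{smallmatrix}\right]$, after which a block-row swap and reversal-matrix multiplications --- all strict equivalences --- yield (\ref{pen1}). Your write-up is in fact slightly more explicit than the paper's about the final reversal step needed to turn $I_\ell+\lambda J_\ell^{T}$ into $I_\ell+\lambda J_\ell$ and $G_\ell+\lambda F_\ell$ into $F_\ell+\lambda G_\ell$, which the paper subsumes under ``is strictly equivalent to (\ref{pen1}).''
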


\begin{proof}
If there is a permutation matrix $S$ such that
\[
SJ_{k}S^{T}=%
\begin{cases}
M_{\ell}:=%
\begin{bmatrix}
0 & G_{\ell}^{T}\\
F_{\ell} & 0
\end{bmatrix}
& \text{if }k=2\ell-1\text{ is odd}\\[1.2em]%
N_{\ell}:=%
\begin{bmatrix}
0 & I_{\ell}\\
J_{m} & 0
\end{bmatrix}
& \text{if }k=2\ell\text{ is even,}%
\end{cases}
\]
then $S(J_{k}+\lambda J_{k}^{T})S^{T}$ is strictly equivalent to (\ref{pen1}).
To prove the existence of such an $S$, we need to prove that $M_{\ell}$ and
$N_{\ell}$ can be obtained from $J_{k}$ by simultaneous permutations of rows
and columns, that is, there exists a permutation $f$ on $\{1,2,\dots,k\}$ that
transforms the positions
\[
(1,2),\ (2,3),\,\dots,\,(k-1,k)
\]
of the unit entries in $J_{k}$ to the positions
\begin{equation}
(f(1),f(2)),\ \ (f(2),f(3)),\ \dots,\ (f(k-1),f(k))\label{wqw}%
\end{equation}
of the unit entries in $M_{\ell}$ if $k=2\ell-1$ or in $N_{\ell}$ if $k=2\ell
$. To obtain the sequence (\ref{wqw}), we arrange the indices of the units in
\[
M_{\ell}=\left[
\begin{array}
[c]{c|c}%
\text{{\LARGE 0}} &
\begin{matrix}
0 &  & 0\\
1 & \ddots & \\
& \ddots & 0\\
0 &  & 1
\end{matrix}
\\\hline%
\begin{matrix}
1 & 0 &  & 0\\
& \ddots & \ddots & \\
0 &  & 1 & 0
\end{matrix}
& \text{{\LARGE 0}}%
\end{array}
\right]  \quad(\text{$(2\ell-1)$-by-$(2\ell-1)$})
\]
as follows:
\begin{multline*}
(\ell,2\ell-1),\ (2\ell-1,\ell-1),\ (\ell-1,2\ell-2),\ (2\ell-2,\ell-2),\\
\dots,(2,\ell+1),\ (\ell+1,1),
\end{multline*}
and the indices of the units in $N_{k}$ as follows:
\[
(1,\ell+1),\,(\ell+1,2),\,(2,\ell+2),\,(\ell+2,3),\dots,(2\ell-1,\ell
),\,(\ell,2\ell)\text{.}%
\]

\end{proof}

\end{document}